\newtheorem{theorem}[equation]{Theorem}
\newtheorem{corollary}[equation]{Corollary}
\theoremstyle{definition}
\theoremstyle{remark}
\newtheorem{example}[equation]{Example}
\numberwithin{equation}{section}
\newcommand{\FF}{\mathbb{F}}
\newcommand{\ZZ}{\mathbb{Z}}
\newcommand{\NN}{\mathbb{N}}
\newcommand{\KK}{\mathbb{K}}
\newcommand{\be}{\mathbf{e}}
\newcommand{\cL}{\mathcal{L}}
\newcommand{\cN}{\mathcal{N}}
\newcommand{\cS}{\mathcal{S}}
\DeclareMathOperator{\Log}{Log}
\DeclareMathOperator{\Span}{Span}
\DeclareMathOperator{\sgn}{sgn}
\DeclareMathOperator{\trdeg}{tr.deg}
\newcommand{\oK}{\overline{K}}
\newcommand{\tpi}{\widetilde{\pi}}
\newcommand{\power}[2]{{#1 [\![ #2 ]\!]}}
\newcommand{\laurent}[2]{{#1 (\!( #2 )\!)}}
\begin{document}

\title[Values of Goss $L$-functions at $s=1$]{Algebraic independence of
values \\ of Goss $L$-functions at $s=1$}

\author{Brad A. Lutes}
\address{Department of Mathematics, Texas A{\&}M University, College Station,
TX 77843, USA}
\email{blutes@math.tamu.edu}

\author{Matthew A. Papanikolas}
\address{Department of Mathematics, Texas A{\&}M University, College Station,
TX 77843, USA}
\email{map@math.tamu.edu}

\dedicatory{In honor and memory of David R.~Hayes}

\thanks{This project was supported by NSF Grant DMS-0903838.}

\subjclass[2000]{Primary 11J93; Secondary 11G09, 11M38}

\date{May 31, 2011}

\begin{abstract}
  We investigate special values of Goss $L$-functions for Dirichlet
  characters at $s=1$ over rings of class number one and prove results
  on their transcendence and algebraic independence.
\end{abstract}

\keywords{Goss $L$-functions, special values, algebraic independence,
  Drinfeld logarithms, special polynomials}

\maketitle

\section{Introduction}
Given a ring of functions in one variable over a finite field that has class number one and a degree one rational place, we consider special values of Goss $L$-functions for Dirichlet characters at $s=1$.  Although we are
restricted to only finitely many possible rings, we obtain precise
formulas for the transcendence degrees of these special $L$-values as
the character varies.

We first establish some notation about rings of class number one.  We
let $q$ denote a fixed power of a prime number~$p$.  We let $A_0 :=
\FF_q[\theta]$.  For $j = 1, \dots, 4$, we let $A_j :=
\FF_q[\theta,\eta]/(f_j)$, where
\begin{equation} \label{E:fjdefs}
\begin{aligned}
f_1 &= \eta^2 - \theta^3 + \theta + 1 \in \FF_3[\theta,\eta],\\
f_2 &= \eta^2 + \eta + \theta^3 + \alpha \in \FF_4[\theta,\eta],\
\alpha \in \FF_4,\ \alpha^2 + \alpha + 1=0,\\
f_3 &= \eta^2 + \eta + \theta^3 + \theta + 1 \in \FF_2[\theta,\eta],\\
f_4 &= \eta^2 + \eta + \theta^5 + \theta^3 + 1 \in \FF_2[\theta,\eta].
\end{aligned}
\end{equation}
Even though $A_1, \dots, A_4$ are defined over specific finite fields,
when discussing these rings generically, we will use the convention
that `$\FF_q$' denotes the base field appropriate to the ring in
question.  Now for each $j$, we let $K_j$ be the fraction field of
$A_j$, and it is well known that each $K_j$ is the function field of a
smooth projective irreducible curve $X_j$ over $\FF_q$ such that $A_j$
is the ring of regular functions on $X_j$ away from a fixed closed
point $\infty$ on $X_j$ of degree $1$.  Most important for our
purposes is that each $A_j$ is a principal ideal domain, and they
represent a complete list of rings of algebraic functions over finite
fields with a degree one rational place that have class number one
\cite{LMQ75}.

Now let $A$ be one of $A_0, \dots, A_4$, with corresponding $f$, $K$, $X$, etc.~as above.  We fix a uniformizer $u := u_j \in K$ at $\infty$: $u_0 := 1/\theta$; for $j=1$, $2$, or $3$, $u_j :=
\theta/\eta$; and $u_4 := \theta^2/\eta$.  We let $K_\infty := \laurent{\FF_q}{u}$ be the completion of $K$ at $\infty$, we let $\KK$ be the completion of an algebraic closure of $K_\infty$, and we set $|\cdot|$ to be the absolute value on $\KK$ such that $|u|= 1/q$.  We define a sign function $\sgn: K_\infty^\times \to \FF_q^\times$ that is trivial on the $1$-units of $K_\infty$ and takes $\sgn(u)=1$, and we set
\[
  A_+ := \{ a \in A : \sgn(a) = 1 \}
\]
to be the monic elements of $A$.

Choose an irreducible polynomial $\wp \in A_+$ with residue field
$\FF_{\wp} := A/\wp$, and let $\chi : \FF_{\wp}^\times \to \FF_{\wp}^\times
\subseteq \KK^\times$
be a homomorphism.  We extend $\chi$ to a Dirichlet character $\chi: A
\to \FF_{\wp}$ by setting $\chi(a) = 0$ whenever $\wp \mid a$.
We then define values of the Goss $L$-function for $\chi$ at positive integers
to be
\[
  L(s,\chi) = \sum_{a \in A_+} \frac{\chi(a)}{a^s} \in
  \laurent{\FF_{\wp}}{u}, \quad s \in \NN.
\]
Goss extends $L(s,\chi)$ to a much larger analytic domain \cite[Ch.~8]{Goss}, but we
are only concerned here with the sums defined above with $s \in \NN$.

In \cite{And96} Anderson related the values $L(1,\chi)$ for $A_0 =
\FF_q[\theta]$ to Carlitz logarithms of algebraic numbers.  Anderson's
results were extended by the first author~\cite{LutesThesis} to $A_1,
\dots, A_4$, where $L(1,\chi)$ was expressed in terms of Drinfeld
logarithms on $\sgn$-normalized Drinfeld-Hayes modules (see
\S\ref{S:logalg}).  Combining these formulas with algebraic
independence results on Carlitz and Drinfeld logarithms due to Chang
and the second author \cite{CP,Papanikolas08}, we prove the following
formula for the transcendence degree of the values $L(1,\chi)$, as
$\chi$ varies, over the algebraic closure $\oK$ of $K$ contained in
$\KK$.

\begin{theorem} \label{T:Main}
Let $\wp \in A_+$ be an irreducible polynomial of degree $d$, and
let $\Xi_{\wp}$ be the group of all Dirichlet characters modulo $\wp$
on $A$.  For each $\chi \in \Xi_{\wp}$, $L(1,\chi)$ is transcendental
over~$\oK$.  Furthermore,
\[
  \trdeg_{\oK}\ \oK \bigl(L(1,\chi) : \chi \in \Xi_{\wp} \bigr) =
  \frac{(q^d-1)(q-2)}{q-1} + 1.
\]
\end{theorem}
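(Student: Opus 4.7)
The plan is to combine two ingredients: explicit formulas expressing each $L(1,\chi)$ as an $\oK$-linear combination of Drinfeld logarithms at algebraic points---due to Anderson \cite{And96} when $A = A_0$ and to the first author \cite{LutesThesis} when $A = A_1,\dots,A_4$---with the algebraic independence theorem of Chang and the second author \cite{CP,Papanikolas08}, which promotes $\oK$-linear independence of such logarithms directly to algebraic independence over $\oK$. The theorem thus reduces to a linear-algebra computation: identify an explicit spanning set of Drinfeld logarithms for $\Span_{\oK}\{L(1,\chi):\chi\in\Xi_\wp\}$ and show its size equals $(q^d-1)(q-2)/(q-1)+1$.

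I would organize the characters by the restriction $\chi\mapsto\chi|_{\FF_q^\times}$, partitioning $\Xi_\wp$ into $q-1$ fibers of size $(q^d-1)/(q-1)$ each. Let $\phi$ denote the sgn-normalized Drinfeld-Hayes $A$-module. In the fiber $\chi|_{\FF_q^\times}=1$ (containing the trivial character in particular), the Anderson-Lutes formula reduces each $L(1,\chi)$ to an $\oK$-multiple of a single distinguished logarithm---essentially $\zeta_A(1)$, which is itself a Drinfeld logarithm of an algebraic point. In each of the remaining $q-2$ fibers, the formula presents $L(1,\chi)$ as a nonzero $\oK$-linear combination of Drinfeld logarithms $\log_\phi(\xi_\chi)$ at algebraic points $\xi_\chi$ built from Gauss-Thakur-type sums on the $\wp$-torsion $\phi[\wp]$. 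Collecting these logarithms exhibits $\Span_{\oK}\{L(1,\chi):\chi\in\Xi_\wp\}$ inside an $\oK$-span of at most $(q-2)(q^d-1)/(q-1)+1$ Drinfeld logarithms, proving the upper bound.

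The main obstacle is the matching lower bound: $\oK$-linear independence of this spanning set. I would attack this through orthogonality of characters on $\FF_\wp^\times$ combined with the $\Gal(K(\phi[\wp])/K)$-action on $\phi[\wp]$, checking simultaneously that (a) the Gauss-Thakur logarithms attached to distinct characters with $\chi|_{\FF_q^\times}\neq 1$ are linearly independent, (b) no character with $\chi|_{\FF_q^\times}=1$ contributes anything independent of $\zeta_A(1)$, and (c) in the resulting basis the coefficient of each individual $L(1,\chi)$ is nonzero, which yields transcendence of each individual value. Once this $\oK$-linear independence is established, the Chang-Papanikolas theorem \cite{CP,Papanikolas08} immediately upgrades it to algebraic independence over $\oK$, giving the stated transcendence degree. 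The case $A=A_0$ can be handled by translating Anderson's original arguments into the Chang-Papanikolas framework, whereas the cases $A_1,\dots,A_4$ require the explicit description of the Drinfeld-Hayes module $\phi$ on the genus-one and genus-two curves $X_1,\dots,X_4$ supplied by \cite{LutesThesis}.
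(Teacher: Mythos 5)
Your high-level strategy --- write each $L(1,\chi)$ as a $\oK$-linear combination of Drinfeld logarithms of algebraic points and invoke Chang--Papanikolas to upgrade $K$-linear independence to algebraic independence --- is exactly the paper's, but two of your structural claims are false and the step you treat as a routine verification is where the entire difficulty lives. On the fiber structure: since $\rho_\zeta=\zeta$ for $\zeta\in\FF_q^\times$, one has $\be_\rho(\zeta a/\wp)=\zeta\,\be_\rho(a/\wp)$, so the Gauss sums $\sum_a\chi^{-1}(a)\be_\rho(a/\wp)^m$ vanish unless $\chi|_{\FF_q^\times}$ is the $m$-th power of the \emph{tautological} inclusion $\FF_q^\times\hookrightarrow\FF_\wp^\times$. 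Consequently the fiber that collapses to a single logarithm is the one where $\chi|_{\FF_q^\times}$ equals the tautological character (the block $m\equiv 1\pmod{q-1}$), and it collapses onto the \emph{period}, via $\lambda_{\rho,1}(1/\wp)=\tpi/\wp$ --- not onto $\zeta_A(1)$. The fiber with $\chi|_{\FF_q^\times}=1$, containing the trivial character, contributes a full $(q^d-1)/(q-1)$ independent logarithms, one of which is essentially $\zeta_A(1)$. The paper's example with $q=3$, $\wp=\theta^2+1$ shows this concretely: $L(1,\chi^j)$ for $j$ odd are all $\oK$-multiples of $\tpi_0$, while $j=2,4,6,8$ yield four algebraically independent logarithms. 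So your claims (a) and (b) are both false as stated; your total count agrees only because the two fibers' contributions happen to be swapped.

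More seriously, the $K$-linear independence of the surviving logarithms cannot be obtained from character orthogonality plus the $\Gal(K(\rho[\wp])/K)$-action. Orthogonality only shows that $\Span_{\oK}\bigl(L(1,\chi):\chi\in\Xi_\wp\bigr)$ equals $\Span_{\oK}\bigl(\lambda_{\rho,m}(1/\wp):1\le m\le q^d-1\bigr)$; it says nothing about relations among the $\lambda_{\rho,m}(1/\wp)$ themselves. The real input is Anderson's rank computation (extended by Lutes) for the module of special points, Theorem~\ref{T:Arank}: $\cS_\rho(\wp)=\exp_\rho(\cL_\rho(\wp))$ has $A$-rank exactly $\#\cN-1=(q^d-1)(q-2)/(q-1)$, which is converted into $K$-linear independence of $\{\lambda_{\rho,m}(1/\wp):m\in\cN\}$ in Corollary~\ref{C:lambdaLinInd}. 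That is a substantial log-algebraicity theorem, not a formal consequence of orthogonality, and your outline contains no argument for it. A smaller omission: your route to the transcendence of each individual $L(1,\chi)$ requires knowing $L(1,\chi)\neq 0$ (immediate from $|L(1,\chi)|=1$), and the paper actually handles individual transcendence separately via Yu's theorem on linear forms in Drinfeld logarithms rather than through the algebraic independence statement.
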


When $\chi$ is the trivial character, then up to the Euler factor at
$\wp$, $L(1,\chi)$ is essentially the zeta value
\[
  \zeta_A(1) = \sum_{a \in A_+} \frac{1}{a}.
\]
These values were proved to be transcendental by
Thakur~\cite{Thakur92} and Yu~\cite{Yu86}.  Some cases of the
transcendence of $L(1,\chi)$ were also established by
Damamme~\cite{Damamme99}.  In the classical case over number fields,
results in these directions have recently been obtained by Murty and
Murty \cite{MurtyMurty1}, \cite{MurtyMurty2}, assuming Schanuel's
conjecture.  It would be quite interesting to place the results here
in the context of recent work of Pellarin~\cite{Pellarin} or
Taelman~\cite{Taelman}, but we have not pursued these lines of inquiry yet.

\section{Preliminaries on Drinfeld-Hayes modules}
\label{S:prelims}

Having fixed $A$, $K$, and $\sgn : K_\infty^\times \to \FF_q^\times$, we know
from the theory of Drinfeld modules that there is a unique
`$\sgn$-normalized' Drinfeld $A$-module of rank~$1$ defined over $K$,
called the Drinfeld-Hayes module (see \cite[Ch.~VII]{Goss}, \cite[\S
3]{Hayes79}, \cite[Ch.~3]{Thakur}).  We let $K\{\tau\}$ be the ring of
twisted polynomials in $\tau$ over $K$, where multiplication is
governed by the rule $\tau c = c^q\tau$ for $c \in K$.  The
Drinfeld-Hayes module $\rho$ is defined to be the $\FF_q$-algebra
homomorphism
\[
  \rho : A \to K\{\tau\},
\]
such that (1) the constant term in $\tau$ of $\rho_a$ is $a$, (2)
$\deg_{\tau}(\rho_a) = \deg(a)$, and (3) the top coefficient of
$\rho_a$ is $\sgn(a)$.  Since $K\{\tau\}$ can be identified with a subring of the ring of $\FF_q$-linear endomorphisms of $K$, we see that $\rho$ induces an $A$-module structure on any extension field $L$ of $K$, denoted by $a \cdot x := \rho_a(x)$, $a \in A$, $x \in L$.
We let $(\rho,L)$ denote $L$ with the $A$-module structure induced by $\rho$.

The existence and uniqueness of such a Drinfeld module are by no means guaranteed, and its construction is fundamental to explicit class field theory for $K$.  Furthermore, for more general rings $A$ (those without class number one), such a $\sgn$-normalized module can be defined only over the Hilbert class field of $K$.  See Hayes \cite{Hayes79}, \cite{Hayes92} for additional information and connections with class field theory.

For each $j = 0, \dots, 4$, the Drinfeld-Hayes
module $\rho^j : A_j \to K_j\{\tau\}$, can be defined by its action on generators of $A_j$ from the defining polynomials in \eqref{E:fjdefs}.  Precise definitions of each $\rho^j$ can be found in \cite[\S 11]{Hayes79} (and also \cite[\S 7.11]{Goss} or \cite{Thakur93}).
When $A=A_0$, then $\rho^0$ is the Carlitz module $\rho^0 : \FF_q[\theta] \to K_0\{\tau\}$, defined by
\begin{equation} \label{E:Carlitz}
  \rho^0_\theta = \theta + \tau.
\end{equation}
When $A=A_1$, $\rho^1 : \FF_3[\theta,\eta] \to K_1\{\tau\}$ is defined by
\begin{equation} \label{E:rho1}
\begin{aligned}
\rho^1_\theta &= \theta + (\eta^3+\eta)\tau + \tau^2, \\
\rho^1_\eta &= \eta + (\eta^4-\eta^2)\tau + (\eta^9 +\eta^3+\eta)\tau^2 + \tau^3.
\end{aligned}
\end{equation}
Since we will not appeal to the precise definitions of $\rho^2$, $\rho^3$, and $\rho^4$ in what follows, we direct the reader to the above references for more details about them.

Now there is an exponential function $\exp_{\rho} : \KK \to \KK$ associated to each Drinfeld module $\rho = \rho^j$, which is given by an $\FF_q$-linear power series
\[
  \exp_\rho(z) = z + \sum_{i \geq 1} e_{\rho,i} z^{q^i} \in \power{K}{z}.
\]
The coefficients of $\exp_\rho(z)$ are determined uniquely by the conditions that
\begin{equation} \label{E:expfneq}
  \exp_\rho(az) = \rho_a \bigl( \exp_\rho(z) \bigr), \quad a \in A,
\end{equation}
and the function $\exp_\rho(z)$ is entire, $\FF_q$-linear, and surjective on $\KK$.  There is a period $\tpi := \tpi_j$ in a finite extension of $K_\infty$ such that we have an exact sequence of $A$-modules
\[
  0 \to A\tpi \to \KK \xrightarrow{\exp_\rho} (\rho, \KK) \to 0,
\]
which uniformizes $\rho$ as the quotient of $\KK$ by a discrete $A$-lattice of rank~$1$.  The inverse of $\exp_\rho(z)$ is given by a power series
\[
  \log_\rho(z) = z + \sum_{i \geq 1} l_{\rho,i} z^{q^i} \in \power{K}{z},
\]
which is convergent for $|z| < |\tpi|$.  For convenience, we record that
\begin{equation} \label{E:pibounds}
|\tpi_0| =  q^{q/(q-1)}, \quad |\tpi_1| = 3^{-3/2}, \quad |\tpi_2| = 4^{-8/3}, \quad
|\tpi_3| = 1, \quad |\tpi_4| = 2^{-4},
\end{equation}
each of which follows from \cite[Cor.~7.10.11]{Goss}.  (See \cite[\S IV.E]{LutesThesis} for details.)

\section{Log-algebraicity formulas and the proof of Theorem~\ref{T:Main}} \label{S:logalg}
In \cite{And96}, Anderson proved a log-algebraic power series identity for twisted harmonic sums over rings of algebraic functions, valid for any $\sgn$-normalized rank $1$ Drinfeld-Hayes module, even over rings of class number $> 1$.  The results there were based on earlier work of Anderson~\cite{And94}, which provided similar power series identities without twisting and which themselves were inspired by results of Thakur~\cite{Thakur92} on special zeta values.  The connections between log-algebraicity identities and special $L$-values $L(1,\chi)$ were first introduced in \cite{And96} for the case $A=\FF_q[\theta]$ and were investigated further in \cite{LutesThesis} for general $A$, in particular for $A_1, \dots, A_4$.

In the context of the Drinfeld-Hayes modules $\rho^0, \dots, \rho^4$ already defined, Anderson's main theorem is the following.

\begin{theorem}[{Anderson~\cite[Thm.~3]{And96}}]
Let $\rho$ be one of the Drinfeld-Hayes modules $\rho^0, \dots, \rho^4$ defined in \S\ref{S:prelims}.  For every $\beta$ in the polynomial ring $A[x]$, the power series
\[
  \exp_\rho \Biggl( \sum_{a \in A_{+}} \frac{ \beta(\rho_a(x))}{a} z^{q^{\deg a}} \Biggr) \in
  \power{K[x]}{z}
\]
is in fact in $A[x,z]$.
\end{theorem}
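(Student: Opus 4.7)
\emph{Proof plan.} Write $\cL_\beta(x,z) := \exp_\rho(S_\beta(x,z))$, where $S_\beta(x,z)$ denotes the sum displayed in the statement. I would split the conclusion $\cL_\beta \in A[x,z]$ into three stages: (i) formal well-definedness $\cL_\beta \in K[x][\![z]\!]$; (ii) polynomiality in $z$, i.e.\ $\cL_\beta \in K[x,z]$; and (iii) $\fp$-integrality of every coefficient at every finite prime $\fp \subset A$. Stage (i) is immediate: since $\rho_a \in A\{\tau\}$ has $\tau$-degree $\deg a$, the polynomial $\beta(\rho_a(x))$ lies in $A[x]$, the coefficient of $z^{q^n}$ in $S_\beta$ is the finite sum $\sum_{a \in A_+,\, \deg a = n} \beta(\rho_a(x))/a \in K[x]$, and composing with $\exp_\rho(y) = y + \sum_{i \geq 1} e_{\rho,i}\, y^{q^i}$ is legitimate at the formal level, giving $\cL_\beta \in K[x][\![z]\!]$.

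For stage (ii), I would use $\infty$-adic analysis. The sizes $|e_{\rho,i}|$ are governed by the period $\tpi$, whose norms are tabulated in \eqref{E:pibounds}, while $|\beta(\rho_a(x))/a|$ admits straightforward polynomial bounds in $|x|$. Combining these with the entirety of $\exp_\rho$ should yield an upper bound on the growth of $\cL_\beta$ in both $x$ and $z$ that is at most polynomial, and a formal power series with polynomial growth in both variables must be a polynomial. For stage (iii), the essential tool is the functional equation $\exp_\rho(ay) = \rho_a(\exp_\rho(y))$, which gives, for each $b \in A_+$,
\[
  \rho_b\bigl(\cL_\beta(x,z)\bigr) = \exp_\rho\bigl(b\, S_\beta(x,z)\bigr).
\]
Grouping the monics $a \in A_+$ according to their residues modulo $b$ then produces a telescoping identity comparing $\cL_\beta$ to shifted copies of itself; because $\rho_b \in A\{\tau\}$ is $\fp$-integral for $\fp \nmid b$, iterating with $b$ a high power of a generator of $\fp$ drives out any $\fp$-adic pole in the coefficients of $\cL_\beta$. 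Intersecting over all $\fp$ then yields $\cL_\beta \in A[x,z]$.

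The main obstacle will be stage (iii). The telescoping argument requires careful bookkeeping of twisted harmonic sums under $x \mapsto \rho_c(x)$ for $c \in A_+$, and it exploits crucially that $A$ has class number one so that each residue class modulo $b$ admits monic representatives; this is the heart of Anderson's method in \cite{And94}, \cite{And96}. Stage (ii) is comparatively mechanical once the precise values $|\tpi|$ from \eqref{E:pibounds} are substituted, though the estimates are sharp enough that case-by-case verification for $j = 0, \dots, 4$ is warranted.
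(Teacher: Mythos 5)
First, a caveat: the paper does not prove this statement at all --- it is quoted verbatim from Anderson \cite[Thm.~3]{And96} --- so your proposal can only be measured against Anderson's published argument. Your stage (i) is fine, and your stage (iii) is pointed in the right direction: writing $a=\wp a'$ for the terms divisible by a monic prime $\wp$ gives
$\wp\, S_\beta(x,z) = \sum_{\wp\nmid a} \wp\,\beta(\rho_a(x))a^{-1}z^{q^{\deg a}} + S_\beta\bigl(\rho_\wp(x), z^{q^{\deg\wp}}\bigr)$,
and combining this with the $\FF_q$-linearity of $\exp_\rho$ and the relation $\rho_\wp(\exp_\rho(y))=\exp_\rho(\wp y)$ is indeed the engine of the integrality part of Anderson's proof; you correctly flag that the bookkeeping there (and the fact that the coefficients $e_{\rho,i}$ of $\exp_\rho$ are themselves not integral at $\wp$) is where the real work lies.

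The genuine gap is stage (ii), on two counts. First, the proposed estimates do not exist: term-by-term, $|\rho_a(x)|\approx|x|^{q^{\deg a}}$ for $|x|$ large, so $|\beta(\rho_a(x))/a|$ grows \emph{doubly exponentially} in $\deg a$, not polynomially; the smallness of the coefficient of $z^{q^n}$ that Anderson actually proves comes entirely from cancellation in the sums $\sum_{\deg a = n}\rho_a(x)^k/a$ over monic $a$ (vanishing and smallness of power sums of monic elements), which is a substantive input and not a mechanical consequence of \eqref{E:pibounds}. Second, and more fundamentally, even the correct $\infty$-adic estimate only shows that the coefficient of $z^{q^n}$ (a polynomial in $x$) has all its coefficients of absolute value $<1$ for $n\gg 0$; since $K_\infty$ is not discrete, ``small'' does not imply ``zero,'' and a power series over $K$ with rapidly decaying coefficients (e.g.\ $\sum_n u^{n^2}z^n$) is entire without being a polynomial. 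The only way to upgrade smallness to vanishing is to know \emph{first} that the coefficients lie in $A[x]$ and then invoke the discreteness of $A$ in $K_\infty$ (a nonzero element of $A$ has absolute value $\geq 1$). So your stages (ii) and (iii) cannot be run independently and in that order: integrality must come first, and the $\infty$-adic estimate is then used to annihilate the tail. As written, stage (ii) proves nothing on its own, and the logical linchpin of the argument --- the discreteness of $A$ at $\infty$ --- is absent from your plan.
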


Taking $\beta = x^m$, $m \geq 0$, and $0 \leq j \leq 4$, we define \emph{special polynomials}
\begin{equation} \label{E:Smj}
  S_m^j(x,z) := S_m(x,z) = \exp_\rho \Biggl( \sum_{a \in A_{+}} \frac{ \rho_a(x)^m}{a} z^{q^{\deg a}} \Biggr) \in A[x,z].
\end{equation}
Anderson worked out extensive properties of special polynomials in the Carlitz module ($\rho =\rho^0$) case \cite[Prop.~8]{And96}, and he provided tables of examples \cite[\S 4.3]{And96}.
The first author extended Anderson's results on special polynomials to the cases of $\rho^1, \dots, \rho^4$ \cite[Prop.~IV.17]{LutesThesis}.  For some specific examples, see \S\ref{S:examples}.

Let $A$ be one of $A_0, \dots, A_4$ with corresponding Drinfeld-Hayes module $\rho$.  We set
\[
  \be_\rho(z) := \exp_\rho(\tpi z).
\]
Fixing a prime $\wp \in A_+$ of degree $d$, the $\wp$-torsion elements $\rho[\wp]$ of $\rho$ are given by all $\be_\rho ( b/\wp)$, $b \in A$.  Of particular interest for us are the specializations
\[
  S_m(\be_\rho(b/\wp),1) \in A[\be_\rho(b/\wp)], \quad b \in A,
\]
which lie in the $\wp$-th Carlitz cyclotomic extension $K':=K(\be_\rho(1/\wp))$.  If we let
\[
  \lambda_{\rho,m}(z) := \sum_{a \in A_{+}} \frac{ \be_\rho(az)^m }{a}, \quad m \geq 0,
\]
then by \eqref{E:expfneq} and \eqref{E:Smj} we have
\begin{equation} \label{E:explambda}
  \exp_\rho \bigl( \lambda_{\rho,m}(b/\wp) \bigr) = S_m(\be_\rho(b/\wp),1),
\end{equation}
and thus each $\lambda_{\rho,m}(b/\wp)$ is the logarithm of a $\oK$-valued point on $\rho$.

We let $\cL_\rho(\wp) = \Span_A \bigl( \lambda_{\rho,m}(b/\wp) : m \geq 0, b \in A \bigr)$ be the $A$-linear span of the elements $\lambda_{\rho,m}(b/\wp)$, and then the module $\cS_\rho(\wp)$ of \emph{special points of $\rho$} is defined to be the image of $\cL_\rho(\wp)$ under $\exp_\rho$.  Thus $\cS_\rho(\wp)$ is the $A$-submodule of $(\rho,K')$ generated by the $S_m(\be_\rho(b/\wp),1)$:
\[
  \cS_\rho(\wp) := \Span_{\rho(A)} \bigl( S_m(\be_\rho(b/\wp),1) : m \geq 0, b \in A \bigr).
\]
It is a fundamental result of Anderson that this module has finite rank for the Carlitz module, which was extended by the first author to the cases $\rho^1, \dots, \rho^4$.

\begin{theorem}[{\cite[Prop.~9 \& Thm.~4]{And96}; \cite[Prop.~IV.23 \& Thm.~VI.4]{LutesThesis}}]
\label{T:Arank}
Let $\cN := \{ 1\} \cup \{ m \in \ZZ : 1 < m \leq q^d-1,\ m \not\equiv 1\ (\mathrm{mod}\ q-1) \}$.
\begin{enumerate}
\item[(a)] We have $\Span_K \bigl( \cL_{\rho}(\wp) \bigr) = \Span_K \bigl( \lambda_{\rho,m}(1/\wp): m \in \cN \bigr)$.
\item[(b)] The $A$-module $\cS_\rho(\wp)$ is finitely generated of rank $\#\cN - 1 = (q^d-1)(q-2)/(q-1)$.
\end{enumerate}
\end{theorem}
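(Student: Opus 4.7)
The containment $\supseteq$ in part (a) is immediate, so we focus on $\subseteq$ via reductions in $b$ and in $m$. First, since $\ker(\exp_\rho)=\tpi A$, the identity $\be_\rho(z+c)=\be_\rho(z)$ for $c\in A$ shows $\lambda_{\rho,m}(b/\wp)$ depends only on $b \bmod \wp$, and it vanishes for $\wp\mid b$ when $m\ge 1$; the $\FF_q$-linearity of $\be_\rho$ gives $\lambda_{\rho,m}(\xi b/\wp)=\xi^m\lambda_{\rho,m}(b/\wp)$ for $\xi\in\FF_q^\times$, which alone handles $\deg\wp=1$. For $\deg\wp>1$, I would pass to Dirichlet characters $\chi\colon(A/\wp)^\times\to\oK^\times$: a rearrangement of sums yields
\[
\sum_{b \bmod \wp}\chi^{-1}(b)\,\lambda_{\rho,m}(b/\wp)=L(1,\chi)\cdot g_m(\chi^{-1}),\qquad g_m(\chi):=\sum_{c\in(A/\wp)^\times}\chi(c)\,\be_\rho(c/\wp)^m,
\]
and the substitution $c\mapsto\xi c$ forces $g_m(\chi)=\chi(\xi)\xi^m g_m(\chi)$, so that $g_m(\chi)=0$ unless $\chi|_{\FF_q^\times}(\xi)=\xi^{-m}$. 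Grouping characters in $\Gal(\FF_\wp/\FF_q)$-orbits then converts Fourier inversion into a $K$-linear expression for every $\lambda_{\rho,m}(b/\wp)$ in terms of the $\lambda_{\rho,m'}(1/\wp)$'s.

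For the reduction in $m$, the Carlitz--Hayes $\wp$-th cyclotomic polynomial $\Phi_\wp(X)=\wp+c_1X^{q-1}+\cdots+X^{q^d-1}$, defined by $\rho_\wp(X)=X\,\Phi_\wp(X)$, vanishes at every nonzero $\wp$-torsion point. Substituting $X=\be_\rho(a/\wp)$ into the definition of $\lambda_{\rho,q^d-1}(1/\wp)$ and using $L(1,\chi_0)=(\wp-1)\zeta_A(1)/\wp$ gives
\[
\lambda_{\rho,q^d-1}(1/\wp) \;=\; -(\wp-1)\,\zeta_A(1)-\sum_{i=1}^{d-1}c_i\,\lambda_{\rho,q^i-1}(1/\wp),
\]
bringing $\zeta_A(1)=\lambda_{\rho,0}(1/\wp)$ into the $K$-span of the $\lambda_{\rho,q^i-1}(1/\wp)$ for $1\le i\le d$, each of which lies in $\cN$ (since $q^i-1\equiv 0\pmod{q-1}$). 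Iterating the additive relation $\rho_\wp(\be_\rho(b/\wp))=0$ reduces every exponent $m\ge q^d$ to a smaller one. It remains to discard the residue class $m\equiv 1\pmod{q-1}$ with $m>1$; here I would invoke the fine analysis of the special polynomials $S_m(x,1)$ in Anderson's Proposition~8 of \cite{And96} (extended to $\rho^1,\ldots,\rho^4$ in the first author's thesis), whose explicit degree bounds in $x$ combined with the $\FF_q^\times$-scaling force $\lambda_{\rho,m}(1/\wp)$ for such $m$ to be $K$-linearly dependent on the $\lambda_{\rho,m'}(1/\wp)$ with $m'\in\cN$.

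For part (b), since $A$ is a PID and $\cL_\rho(\wp)\subset\KK$ is $A$-torsion-free, part (a) yields $\mathrm{rank}_A\cL_\rho(\wp)=\#\cN$. The short exact sequence
\[
0 \to \cL_\rho(\wp)\cap\tpi A \to \cL_\rho(\wp)\xrightarrow{\exp_\rho}\cS_\rho(\wp)\to 0
\]
reduces the rank computation to showing that $\cL_\rho(\wp)\cap\tpi A$ has $A$-rank exactly $1$, the upper bound being automatic since $\tpi A$ has rank one. The existence of a nonzero element of $\cL_\rho(\wp)\cap\tpi A$ follows from combining the log-algebraic identity $\exp_\rho(\zeta_A(1))=S_0(\be_\rho(1/\wp),1)\in A\setminus\{0\}$ with the $\Phi_\wp$-relation above, yielding $\mathrm{rank}_A\cS_\rho(\wp)=\#\cN-1=(q^d-1)(q-2)/(q-1)$.

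The hardest step throughout is eliminating the residue class $m\equiv 1\pmod{q-1}$ with $m>1$ in the $m$-reduction: the Gauss-sum vanishing alone allows these exponents, so their exclusion rests on the explicit polynomial identities for $S_m(x,1)$ established in \cite{And96} and extended to $\rho^1,\ldots,\rho^4$ in the first author's thesis---precisely where the four exotic rings $A_1,\ldots,A_4$ are substantially harder than the Carlitz case.
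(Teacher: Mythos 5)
First, note that the paper does not actually prove Theorem~\ref{T:Arank}: it is imported wholesale from \cite[Prop.~9, Thm.~4]{And96} and \cite[Prop.~IV.23, Thm.~VI.4]{LutesThesis}, with only the gloss surrounding \eqref{E:lambda1} hinting at the mechanism. Your sketch of part (a) is broadly consistent with that strategy: the $\FF_q^\times$-scaling, the reduction of exponents via $\rho_\wp(\be_\rho(b/\wp))=0$, the relation pulling $\zeta_A(1)=\lambda_{\rho,0}(1/\wp)$ into the span of the $\lambda_{\rho,q^i-1}(1/\wp)$, and the appeal to the special-polynomial identities for the class $m\equiv 1\ (\mathrm{mod}\ q-1)$, $m>1$, are all in the right places (the precise fact you want there is $S_m(x,1)=0$ for such $m$, which puts $\lambda_{\rho,m}(1/\wp)$ into $\Ker\exp_\rho=A\tpi\subseteq K\lambda_{\rho,1}(1/\wp)$ directly; your ``degree bounds plus scaling'' phrasing gestures at this but does not pin down the actual mechanism). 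The reduction in $b$ via Fourier inversion needs care to land in the $K$-span rather than the $\oK$-span, but Galois equivariance plausibly handles that, and this matches the role of \eqref{E:partzeta}.

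Part (b), however, has a genuine gap, and it sits exactly at the hard content of the cited theorems. You assert that part (a) yields $\mathrm{rank}_A\,\cL_\rho(\wp)=\#\cN$; but (a) is a spanning statement and only gives the upper bound $\mathrm{rank}_A\,\cL_\rho(\wp)\le\#\cN$. The matching lower bound is equivalent to the $K$-linear independence of the $\lambda_{\rho,m}(1/\wp)$, $m\in\cN$ --- and in the paper's logical architecture that independence is Corollary~\ref{C:lambdaLinInd}, which is \emph{deduced from} part (b), not used to prove it. So your argument is circular: the rank lower bound for $\cS_\rho(\wp)$ (Anderson's Theorem~4, Lutes's Theorem~VI.4) must be established independently, e.g.\ by proving directly that the special points $S_m(\be_\rho(1/\wp),1)$, $m\in\cN\setminus\{1\}$, generate an $A$-module of full rank, and you give no argument for this. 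Separately, your construction of a nonzero element of $\cL_\rho(\wp)\cap A\tpi$ does not work: the $\Phi_\wp$-relation, summed against $1/a$, produces the identity $\lambda_{\rho,q^d-1}(1/\wp)+\sum_i c_i\lambda_{\rho,q^i-1}(1/\wp)+(\wp-1)\zeta_A(1)=0$, i.e.\ the \emph{zero} element of $A\tpi$, and the fact that $\exp_\rho(\zeta_A(1))=S_0(\be_\rho(1/\wp),1)$ lies in $A\setminus\{0\}$ says $\zeta_A(1)$ is the logarithm of a (generically non-torsion) integral point, which points away from $A\tpi$ rather than into it. The element actually responsible for the rank drop is the one the paper singles out in \eqref{E:lambda1}: $\lambda_{\rho,1}(1/\wp)=\tpi/\wp$, so that $\wp\cdot\lambda_{\rho,1}(1/\wp)=\tpi$ is a nonzero element of $\cL_\rho(\wp)\cap A\tpi$ and $S_1(\be_\rho(1/\wp),1)=\be_\rho(1/\wp)$ is torsion. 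Your proposal never uses this identity, which is the one ingredient the paper explicitly flags as essential to the proof.
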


We note that for (a) and (b) in the above theorem to be true there must be an element of $\cL_\rho(\wp)$, or at least an $A$-linear combination of elements of $\cL_\rho(\wp)$, whose exponential is torsion on $\rho$.  This is revealed in the proof to be
\begin{equation} \label{E:lambda1}
  \lambda_{\rho,1}(1/\wp) = \tpi/\wp, \quad S_1(\be_\rho(1/\wp),1) = \be_\rho(1/\wp).
\end{equation}
See the proofs of \cite[Prop.~8(VII)]{And96}, with $m=1$, or \cite[Thm.~VI.4]{LutesThesis}.

\begin{corollary} \label{C:lambdaLinInd}
The set $\{ \lambda_{\rho,m}(1/\wp) : m \in \cN\}$ is linearly independent over $K$.
\end{corollary}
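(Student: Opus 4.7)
The plan is to show that $\dim_K \Span_K(\cL_\rho(\wp)) = \#\cN$. By Theorem~\ref{T:Arank}(a) this span equals $\Span_K(\lambda_{\rho,m}(1/\wp) : m \in \cN)$, which is a set of exactly $\#\cN$ elements, so once the dimension is confirmed to equal $\#\cN$ the linear independence of the spanning set will be automatic.

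To compute the dimension I would study the $A$-module map $\exp_\rho \colon \cL_\rho(\wp) \twoheadrightarrow \cS_\rho(\wp)$, where $\cL_\rho(\wp)$ carries its natural $A$-action by multiplication in $\KK$, where $\cS_\rho(\wp)$ carries the action via $\rho$, and where the functional equation \eqref{E:expfneq} ensures $A$-linearity. The key observation is that the full kernel $A\tpi$ of $\exp_\rho \colon \KK \to \KK$ lies inside $\cL_\rho(\wp)$: from \eqref{E:lambda1} we have $\tpi = \wp \cdot \lambda_{\rho,1}(1/\wp) \in \cL_\rho(\wp)$, and since $\cL_\rho(\wp)$ is an $A$-module this upgrades to $A\tpi \subseteq \cL_\rho(\wp)$. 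Consequently the restriction of $\exp_\rho$ fits in a short exact sequence of $A$-modules
\[
  0 \to A\tpi \to \cL_\rho(\wp) \xrightarrow{\exp_\rho} \cS_\rho(\wp) \to 0.
\]
Since $A\tpi$ has $A$-rank one and Theorem~\ref{T:Arank}(b) gives that $\cS_\rho(\wp)$ has $A$-rank $\#\cN - 1$, additivity of rank in short exact sequences of finitely generated $A$-modules yields $\mathrm{rank}_A(\cL_\rho(\wp)) = \#\cN$. Because $\cL_\rho(\wp) \subseteq \KK$ is torsion-free as an $A$-module, its $A$-rank coincides with $\dim_K \Span_K(\cL_\rho(\wp))$, which completes the count.

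The main subtlety, not an obstacle so much as a bookkeeping point, is to keep the two different $A$-module structures straight across $\exp_\rho$ and to verify that $A\tpi$ sits inside $\cL_\rho(\wp)$ as an $A$-submodule (rather than merely noting that $\tpi$ happens to be one element). Once that is in place, the corollary is a short formal consequence of the rank formula in Theorem~\ref{T:Arank}(b).
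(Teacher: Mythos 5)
Your proof is correct and follows essentially the same route as the paper: both arguments compute $\dim_K \Span_K(\cL_\rho(\wp)) = \#\cN$ by splitting the rank of $\cL_\rho(\wp)$ into the rank $\#\cN - 1$ of its image $\cS_\rho(\wp)$ under $\exp_\rho$ (Theorem~\ref{T:Arank}(b)) plus a kernel contribution of $1$ coming from $\lambda_{\rho,1}(1/\wp) = \tpi/\wp$ as in \eqref{E:lambda1}. Your explicit verification that $A\tpi \subseteq \cL_\rho(\wp)$, so that the kernel of the restricted map is exactly $A\tpi$, is a welcome clarification of a step the paper leaves implicit.
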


\begin{proof}
Let $W = \Span_K \bigl( \cL_\rho(\wp)\bigr) \subseteq \KK$.  Since $\cS_{\rho}(\wp) = \exp_{\rho}(\cL_\rho(\wp))$ has $A$-rank $\#\cN - 1$, it follows that
\[
  \dim_K W = (\#\cN -1) + \dim_K(W \cap K\tpi).
\]
By \eqref{E:lambda1} we see that the second term is equal to $1$.
\end{proof}

One can also uniquely define \emph{dual coefficients} $\be_{\rho,m}^*(a) \in \oK$, for $a \in A$, $\wp \nmid a$, $m \geq 1$, by requiring, for $a$ and $b$ relatively prime to $\wp$,
\begin{equation} \label{E:estar}
  \sum_{m=1}^{q^{d}-1} \be_{\rho,m}^*(a) \be_\rho(b/\wp)^m = \wp \cdot \delta_{ab} =
  \begin{cases}
  \wp & \textnormal{if $a \equiv b \pmod{\wp}$,} \\
  0 & \textnormal{otherwise.}
  \end{cases}
\end{equation}
See \cite[Prop.~10]{And96} and
\cite[\S V.D]{LutesThesis} for details; results of Feng~\cite{Feng97} and Zhao~\cite{Zhao97} provide additional information on computing dual coefficients and associated root numbers in terms of Gauss sums in the case $\rho =\rho^0$.  The dual coefficients are defined precisely to obtain a formula for the partial zeta function,
\begin{equation} \label{E:partzeta}
  \sum_{\substack{ n \in A_+ \\ bn\, \equiv\, a\, (\mathrm{mod}\,\wp)}} \frac{1}{n}= \frac{1}{\wp} \sum_{m=1}^{q^d-1} \be_{\rho,m}^*(a) \lambda_{\rho,m}(b/\wp),
\end{equation}
for any $a$, $b \in A$ relatively prime to $\wp$.  This formula is obtained in a straightforward manner by substituting in the defining series for $\lambda_{\rho,m}(b/\wp)$ and simplifying the resulting expression (see \cite[\S 4.7]{And96} or \cite[\S V.E]{LutesThesis}).  Now for any Dirichlet character $\chi \in \Xi_{\wp}$, if we take $b=1$ in \eqref{E:partzeta}, multiply by $\chi(a)$, and take the sum over $a$, we obtain
\begin{equation} \label{E:L1chisum}
  L(1,\chi) = \sum_{m=1}^{q^d-1} \Biggl( \frac{1}{\wp} \sum_{a \in \FF_\wp^{\times}} \chi(a) \be_{\rho,m}^*(a) \Biggr) \lambda_{\rho,m}(1/\wp),
\end{equation}
where the inner sum runs over $a \in A$ representing classes in $\FF_{\wp}^\times$.  The inner sums are the root numbers.  Using the orthogonality relations for characters and \eqref{E:estar}, we find also that
\begin{equation} \label{E:lambdasum}
  \lambda_{\rho,m}(1/\wp) =  \sum_{\chi \in \Xi_{\wp}} \Biggl( \sum_{a \in \FF_\wp^\times}
  \chi^{-1}(a) \be_\rho(a/\wp)^m \Biggr) L(1,\chi).
\end{equation}
We can now prove our main result.

\begin{proof}[Proof of Theorem~\ref{T:Main}]
Because $\rho$ is a rank $1$ Drinfeld $A$-module, its endomorphism ring is simply $A$.  Therefore, by \cite[Thm.~4.3]{Yu97}, we see that $\oK$-linear combinations of logarithms of $\oK$-valued points on $\rho$ are either $0$ or transcendental.  Applying this to the sum in \eqref{E:L1chisum}, each $L(1,\chi)$ is transcendental over $\oK$, since $L(1,\chi) \neq 0$ (e.g.\ $|L(1,\chi)| = 1$).

Now by \eqref{E:L1chisum} and \eqref{E:lambdasum} we see that
\[
  \Span_{\oK} \bigl( L(1,\chi) : \chi \in \Xi_\wp \bigr) = \Span_{\oK} \bigl(
  \lambda_{\rho,m}(1/\wp) : 1 \leq m \leq q^d-1 \bigr),
\]
and thus it suffices to calculate the transcendence degree of $\oK\bigl( \lambda_{\rho,m}(1/\wp) : 1 \leq m \leq q^d-1 \bigr)$ over $\oK$.  Now from \eqref{E:explambda}, $\exp_\rho(\lambda_{\rho,m}(1/\wp)) \in \oK$ for each $m$.  Also, by Theorem~\ref{T:Arank} and Corollary~\ref{C:lambdaLinInd},  we see that $\{ \lambda_{\rho,m}(1/\wp) : m \in \cN \}$ is a $K$-basis of $\Span_K\bigl( \cL_{\rho}(\wp) \bigr)$.  Thus
\[
  \oK \bigl(\lambda_{\rho,m}(1/\wp) : m \in \cN \bigr) = \oK \bigl( \lambda_{\rho,m}(1/\wp) : 1 \leq m \leq q^d-1 \bigr) = \oK\bigl( \cL_{\rho}(\wp) \bigr).
\]
Finally, by \cite[Thm.~1.1.1]{CP} and \cite[Thm.~1.2.6]{Papanikolas08}, it follows that
\[
  \trdeg_{\oK}\ \oK \bigl(\lambda_{\rho,m}(1/\wp) : m \in \cN \bigr) = \#\cN =
  \frac{(q^d-1)(q-2)}{q-1} + 1.
\]
\end{proof}

\section{Examples} \label{S:examples}
We now calculate some examples to elaborate on Theorem~\ref{T:Main}.  Although formulas such as \eqref{E:L1chisum} work well for theoretical purposes, they often contain hidden cancelations, and sometimes $L$-values can be evaluated somewhat more directly.

\begin{example}
We begin by letting $\rho = \rho^0$ be the Carlitz module (see \eqref{E:Carlitz}) over $\FF_3[\theta]$ and taking $\wp = \theta$. The character group $\Xi_\theta$ is of order $2$, generated by the character $\chi : \FF_3[\theta] \to \FF_3$ defined by $\chi(a) = a(0)$.  Referring to \cite[\S 4.3]{And96}, we see that the first special polynomial for $\rho$ is
$S_1^0(x,z) = xz$.  Therefore by \eqref{E:explambda},
\[
  \exp_\rho \Biggl( \sum_{a \in \FF_3[\theta]_+} \frac{\be_\rho(a/\theta)}{a} \Biggr) = \be_\rho(1/\theta).
\]
Now standard facts about the Carlitz module \cite[Ch.~3]{Goss} imply that for all $a \in \FF_3[\theta]$
\[
  \rho_a(\be_\rho(a/\theta)) = \chi(a) \be_\rho(a/\theta),
\]
and also $\be_\rho(1/\theta) = \sqrt{-\theta}$.  Thus,
\[
  \exp_\rho \bigl( \sqrt{-\theta} L(1,\chi) \bigr) = \sqrt{-\theta}.
\]
Since $\sqrt{-\theta}$ is $\theta$-torsion, it follows that
\begin{equation} \label{E:L1chi1}
  \sqrt{-\theta} L(1,\chi) \in \FF_3[\theta]\cdot \frac{\tpi_0}{\theta}.
\end{equation}
Recalling that the Carlitz period over $\FF_3(\theta)$ has the expression
\[
\tpi_0 = \theta \sqrt{-\theta} \prod_{i=1}^\infty \biggl( 1 - \frac{1}{\theta^{3^i-1}}
\biggr)^{-1},
\]
(see \cite[\S 2.5]{Thakur}), it follows that the $\FF_3[\theta]$-multiple in \eqref{E:L1chi1} is simply $1$: indeed, the right-hand side is a discrete set, and so this can be checked after a finite amount of computation.  Therefore,
\begin{equation}
  L(1,\chi) = \frac{\tpi_0}{\theta\sqrt{-\theta}}.
\end{equation}
As $\chi^2$ is the trivial character modulo $\theta$, we observe (see \cite[\S 5.9]{Thakur}) that
\begin{equation} \label{E:L1epsilon}
  L(1,\chi^2) = \biggl(1-\frac{1}{\theta} \biggr) \log_\rho(1),
\end{equation}
and since $1$ is not a torsion point on $\rho$, we see that $\oK(L(1,\chi), L(1,\chi^2)) = \oK(\tpi_0, \log_\rho(1))$, and moreover
$L(1,\chi)$ and $L(1,\chi^2)$ are algebraically independent by \cite[Thm.~1.2.6]{Papanikolas08}.
\end{example}

\begin{example}
We again consider the Carlitz module case $\rho = \rho^0$ over
$\FF_3[\theta]$ but now take $\wp = \theta^2+1$; however, in the interests of space we must omit many of the details.  We let $\chi :
\FF_3[\theta] \to \FF_9$ be the character defined by $\chi(a) =
a(\sqrt{-1})$, which has order $8$.  For $\alpha \in \laurent{\FF_9}{1/\theta}$, we let $\alpha \mapsto \overline{\alpha}$ denote the canonical automorphism that sends $\sqrt{-1}$ to $-\sqrt{-1}$ and fixes $\theta$.  Thus for all $j$, $\overline{L(1,\chi^j)} = L(1,\chi^{3j})$.
We let
\[
  \zeta_1 := \be_\rho\biggl( \frac{1}{\theta^2+1} \biggr), \quad
  \zeta_2 := \be_\rho\biggl( \frac{\theta}{\theta^2+1} \biggr),
\]
and then $\rho[\theta^2+1] = \FF_3\zeta_1 + \FF_3\zeta_2$.  Now using that $S_1^0(x,z) = xz$, we see
\begin{equation} \label{E:expt21}
  \exp_\rho \Biggl( \sum_{a \in \FF_3[\theta]_+}
  \frac{\be_\rho(a/(\theta^2+1))}{a} \Biggr) = \zeta_1.
\end{equation}
A straightforward calculation reveals that for all $a \in \FF_3[\theta]$,
\[
  \be_{\rho}\biggl( \frac{a}{\theta^2+1} \biggr) = -(\chi(a) + \chi^3(a))
  \zeta_1 + \sqrt{-1}(\chi(a)-\chi^3(a)) \zeta_2.
\]
Therefore,
\[
  \sum_{a \in \FF_3[\theta]_{+}} \frac{\be_\rho(a/(\theta^2+1))}{a} =
  ( -\zeta_1 + \sqrt{-1}\,\zeta_2 )\cdot L(1,\chi)
  - (\zeta_1 + \sqrt{-1}\,\zeta_2 )\cdot L(1,\chi^3),
\]
and \eqref{E:expt21} implies that the right-hand side is in
$\FF_3[\theta]\cdot \tpi_0/(\theta^2+1)$.  Again a computation reveals
that this multiple is exactly $1$, and thus
\begin{equation} \label{E:chichi3one}
  ( -\zeta_1 + \sqrt{-1}\,\zeta_2 )\cdot L(1,\chi)
  - ( \zeta_1 +\sqrt{-1}\,\zeta_2 )\cdot L(1,\chi^3) = \frac{\tpi_0}{\theta^2+1}.
\end{equation}
Similarly, for $a \in \FF_3[\theta]$,
\[
  \be_{\rho}\biggl( \frac{a\theta}{\theta^2+1} \biggr) =
  -\sqrt{-1}(\chi(a) - \chi^3(a)) \zeta_1
   - (\chi(a)+\chi^3(a)) \zeta_2,
\]
and we find, after summing over all $a \in \FF_3[\theta]_+$, that
\begin{equation} \label{E:chichi3two}
  ( -\sqrt{-1}\,\zeta_1 - \zeta_2) \cdot L(1,\chi)
  + ( \sqrt{-1}\,\zeta_1 - \zeta_2 ) \cdot L(1,\chi^3) =
  \frac{\theta \tpi_0}{\theta^2+1}.
\end{equation}
Combining \eqref{E:chichi3one} and \eqref{E:chichi3two}, we obtain
\begin{align} \label{E:L1chichi3}
L(1,\chi) &= \frac{\tpi_0}{(\zeta_1 - \sqrt{-1}\,\zeta_2)(1 +
\sqrt{-1}\, \theta)}, \\
L(1,\chi^3) &= \frac{\tpi_0}{(\zeta_1+\sqrt{-1}\,\zeta_2)(1 -
\sqrt{-1}\, \theta)}, \quad \textnormal{($= \overline{L(1,\chi)}$).}
\end{align}
In a similar manner, using also that $\zeta_1^2 + \zeta_2^2 = \zeta_1\zeta_2(\zeta_1^2-\zeta_2^2) = -\sqrt{\theta^2+1}$ (taking the positive square root of $\theta^2+1$ with respect to $\sgn$),
\begin{multline}
L(1,\chi^2) = \frac{1}{\sqrt{\theta^2+1}} \left[ -\frac{\zeta_1^3\zeta_2^3}{\theta^2+1} \log_\rho(\zeta_1^2) + \frac{1}{\zeta_1\zeta_2} \log_\rho(\zeta_2^2) + \frac{\zeta_1\zeta_2}{\sqrt{\theta^2+1}} \log_\rho(\zeta_1^4-\zeta_1^6) \right] \\
{}+\sqrt{-1} \left[ -\frac{1}{\zeta_1^3\zeta_2^3} \log_\rho(\zeta_1^2) + \frac{\zeta_1 \zeta_2}{\theta^2+1} \log_\rho(\zeta_2^2) - \frac{1}{\zeta_1\zeta_2\sqrt{\theta^2+1}}
\log_\rho( \zeta_1^4-\zeta_1^6) \right],
\end{multline}
and $L(1,\chi^6) = \overline{L(1,\chi^2)}$.  We observe that $|\zeta_1| = 3^{-1/2}$ and $|\zeta_2| = 3^{1/2}$, and so $\zeta_1^2$, $\zeta_2^2$, and $\zeta_1^4-\zeta_1^6$ are within the radius of convergence of $\log_\rho(z)$ as given in \eqref{E:pibounds}.  Furthermore,
\begin{equation}
  L(1,\chi^4) = \frac{\log_\rho(\sqrt{\theta^2+1})}{\sqrt{\theta^2+1}}.
\end{equation}
Also,
\begin{equation}
  L(1,\chi^5) = \frac{\tpi_0}{(\sqrt{-1}\,\zeta_1 + \zeta_2) \sqrt{\theta^2+1}},
\end{equation}
and $L(1,\chi^7) = \overline{L(1,\chi^5)}$.  Finally, since $\chi^8$ is the trivial character, as in the previous example,
\begin{equation} \label{E:L1chi8}
  L(1,\chi^8) = \left( 1 - \frac{1}{\theta^2+1} \right) \log_\rho(1).
\end{equation}
Thus combining \eqref{E:L1chichi3}--\eqref{E:L1chi8}, we see that
\begin{align*}
  \oK\bigl( L(1, \chi^j) : 1\leq j \leq 8 \bigr) &= \oK\bigl( L( 1, \chi), L(1,\chi^2), L(1,\chi^4), L(1,\chi^6), L(1,\chi^8) \bigr) \\
  &= \oK \bigl(\tpi_0, \log_\rho(1), \log_\rho(\zeta_1^2), \log_\rho(\zeta_2^2), \log_\rho(\zeta_1^4-\zeta_1^6) \bigr).
\end{align*}
By an analysis similar to the one in the proof of Theorem~\ref{T:Arank}, these logarithms can be shown to be linearly independent over $\FF_3(\theta)$ and thus are algebraically independent over $\oK$ by \cite[Thm.~1.2.6]{Papanikolas08}.
\end{example}

\begin{example}
Let $\rho = \rho^1$ be the Drinfeld-Hayes module for $A = A_1 = \FF_3[\theta,\eta]$ defined in \eqref{E:rho1}.  We let $\chi : A \to \FF_9$ be the character modulo $\theta$ of order $8$ defined by $\chi(a) := a(0,\sqrt{-1})$.  Here we present formulas for $L(1,\chi^j)$, $1\leq j \leq 8$, which take values in $\laurent{\FF_9}{\theta/\eta} = \laurent{\FF_9}{1/\sqrt{\theta}}$.  As in the previous example $\overline{L(1,\chi^j)} = L(1,\chi^{3j})$, where $\alpha \mapsto \overline{\alpha}$ denotes the canonical automorphism of $\laurent{\FF_9}{1/\sqrt{\theta}}$ such that $\sqrt{-1} \mapsto -\sqrt{-1}$ and $\sqrt{\theta}$ is fixed.  As expected, we find that
\[
    \oK\bigl( L(1, \chi^j) : 1\leq j \leq 8 \bigr) = \oK\bigl( L( 1, \chi), L(1,\chi^2), L(1,\chi^4), L(1,\chi^6), L(1,\chi^8) \bigr),
\]
and that the terms on the right form a transcendence basis by Theorem~\ref{T:Main} via \cite[Thm.~1.1.1]{CP}.  One goal of this example is to make these $L$-values more explicit.  The calculations are more involved but similar in spirit to the previous examples, and again space limitations force us to omit many of the details.  Furthermore, because of computational difficulties (see \eqref{E:S1polys}), we were unable to calculate exact expressions for $L(1,\chi^2)$ and $L(1,\chi^6) = \overline{L(1,\chi^2)}$.

We calculate some preliminary data.  Using \cite[Eq.~(7.10.6)]{Goss} the period $\tpi_1$ of $\rho$ can be computed by taking the product
\begin{multline*}
  \Pi_1 = \theta^{-9}(\eta^6 + \eta^4 + \eta^2) \\ {}\cdot \prod_{a \in A_+}
   \left[ \left( 1- \frac{1}{a^2\theta^2} \right)\left(1 - \frac{\eta^2}{a^2\theta^2} \right)
   \left( 1 - \frac{(\eta+1)^2}{a^2\theta^2} \right) \left( 1 - \frac{(\eta-1)^2}{a^2\theta^2}
   \right) \right]^2,
\end{multline*}
and then extracting an $8$-th root,
\[
  \tpi_1 = \sqrt{-1}\cdot \Pi_1^{1/8},
\]
where the root of $\Pi_1$ is chosen to have $\sgn$ value $1$.  We let
\[
  \xi_1 := \be_\rho \biggl( \frac{1}{\theta} \biggr), \quad
  \xi_2 := \be_\rho \biggl( \frac{\eta}{\theta} \biggr),
\]
and thus $\rho[\theta] = \FF_3 \xi_1 + \FF_3\xi_2$.  We find that $|\xi_1| = 3^{-7/2}$ and $|\xi_2| = 3^{3/2}$, and furthermore
\[
  \xi_1\xi_2 (\xi_1^2 - \xi_2^2) = \sqrt{\theta}, \quad \xi_1^2 + \xi_2^2 = -(\theta+1)\sqrt{\theta},
  \quad \xi_1^4 + \xi_2^4 = \eta(\theta-1)\sqrt{\theta}.
\]
Finally we compute the special polynomials (see \cite[p.~492]{And94} and \cite[Ch.~VII]{LutesThesis}),
\begin{equation} \label{E:S1polys}
\begin{aligned}
  S_0^1(x,z) &= z + \eta z^3 + z^9,\\
  S_1^1(x,z) &= xz + \eta x^3z^3 - \eta x^3z^9 + x^9z^9 - x^9z^{27},\\
  S_2^1(x,z) &=
  \begin{aligned}[t]
  x^2z &+ \eta x^6z^3 - (\eta^4+\eta^2)x^6 z^9 + \eta x^{12}z^9 + x^{18}z^9\\
    &{}- (\eta^9 - \eta^3 - \eta) x^{18}z^{27} + x^{36}z^{27} + x^{54}z^{81}.
  \end{aligned}
\end{aligned}
\end{equation}
In spite of our best efforts we were unable to compute $S_4^1(x,z)$, which would be necessary for finding an exact expression for $L(1,\chi^2)$, but we obtained the partial result,
\begin{align*}
S_4^1(x,z) = \ &x^4z + \eta x^{12}z^3 - (\eta^3+\eta)x^6z^9 - (\eta^{10}+\eta^4 + 1)x^{12}z^9
  - (\eta^7 - \eta^5 + \eta^3) x^{18}z^9 \\
& {}- \eta x^{30}z^9 + x^{36}z^9 + (\eta^{16} - \eta^{14} + \eta^{12} - \eta^8 + \eta^4)
x^{18}z^{27} \\
& {}- (\eta^{27} + \eta^9 + \eta^7 - \eta^5 + \eta^3 -\eta) x^{36}z^{27} \\
& {}- (\eta^{18} + \eta^{12} + \eta^{10} + \eta^4 - \eta^2 - 1)x^{54}z^{27} - x^{90} z^{27}\\
& {}+ (\eta^{45} + \eta^{39} + \eta^{37} + \eta^{31} - \eta^{29} - \eta^{27} + \eta^{21}
+\eta^{19} - \eta^{13} + \eta^{11} - \eta^9 - \eta^7 \\
& {}+ \eta^5 - \eta^3 +\eta)x^{54}z^{81} - (\eta^{18} + \eta^{12} + \eta^{10} + \eta^4 - \eta^2
+1) x^{108}z^{81} \\
& {}- (\eta^{27} + \eta^9 + \eta^3 + \eta)x^{162}z^{81} + O(z^{243}).
\end{align*}
However, bounds on the degree of $z$ in $S_4^1(x,z)$ (see \cite[Prop.~IV.17(5)]{LutesThesis}) allow for non-zero $z^{243}$, $z^{729}$, $z^{2187}$, and $z^{6561}$ terms, and the required precision on the coefficients exceeded our computing power.  Fortunately, computations of $S_3^1(x,z)$ and $S_5^1(x,z)$ were unnecessary, since $S_m^1(x,1) = 0$ for all odd $m \geq 3$ (see \cite[Prop.~IV.17(6)]{LutesThesis}).

Combining the above computations with \eqref{E:explambda} (see also \cite[Ex.~VIII.4]{LutesThesis}), we find that
\begin{align}
L(1,\chi) &= \frac{\bigl(-\xi_1 + \eta(\theta-1)\xi_2 - (\xi_2 + \eta(\theta-1)\xi_1) \sqrt{-1} \bigr)}{(\theta+1)\theta\sqrt{\theta}} \cdot \tpi_1, \\
L(1,\chi^3) &= \frac{\bigl(-\xi_1 + \eta(\theta-1)\xi_2 + (\xi_2 + \eta(\theta-1)\xi_1) \sqrt{-1} \bigr)}{(\theta+1)\theta\sqrt{\theta}} \cdot \tpi_1, \quad (= \overline{L(1,\chi)}),\\
L(1,\chi^5) &= \frac{(-\sqrt{-1}\, \xi_1 + \xi_2)(\theta+1)}{\theta} \cdot \tpi_1, \\
L(1,\chi^7) &= \frac{(\sqrt{-1}\, \xi_1 + \xi_2) (\theta+1)}{\theta} \cdot \tpi_1, \quad (=
\overline{L(1,\chi^5)}).
\end{align}
Let $\Log_\rho(z)$ denote the multi-valued logarithm of $\rho$, for which $\log_\rho(z)$ is a particular branch, i.e.\ for $\alpha$, $\beta \in \KK$, we write $\Log_\rho(\alpha) = \beta$ if $\exp_\rho(\beta) = \alpha$.   Then we find
\begin{align}
  L(1,\chi^4) &= \frac{1}{\sqrt{\theta}} \Log_\rho \bigl( \theta^4 \sqrt{\theta} + (\eta
    + 1)\theta \sqrt{\theta} + \sqrt{\theta} \bigr),
    \label{E:L1chi4rho1} \\
  L(1,\chi^8) &= \biggl( 1 - \frac{1}{\theta} \biggr) \Log_\rho(\eta - 1), \label{E:L1chi8rho1}
\end{align}
where, since $\chi^8$ is the trivial character, \eqref{E:L1chi8rho1} is simply a restatement of \cite[Thm.~VI]{Thakur92}.  The reason for using $\Log_\rho$ is that the above arguments inside $\Log_\rho$ are not within the radius of convergence of $\log_\rho(z)$ according to \eqref{E:pibounds}, and we did not find any attractive formulas for $L(1,\chi^4)$ using $\log_\rho(z)$ only, as in \cite[Thm.~VI]{Thakur92}.  However, the right-hand side of \eqref{E:L1chi4rho1} is the unique logarithm of $\theta^4 \sqrt{\theta} + (\eta + 1)\theta \sqrt{\theta} + \sqrt{\theta}$ that is in $\laurent{\FF_3}{1/\sqrt{\theta}}$.  Finally, for $L(1,\chi^2)$ and $L(1,\chi^6)$ we have the formulas
\begin{multline}
 \biggl( \frac{\sqrt{\theta}}{\xi_1\xi_2} + \sqrt{-1}\,\xi_1\xi_2 \biggr)L(1,\chi^2) + (\theta+1)
   \sqrt{\theta}L(1,\chi^4) + \biggl( \frac{\sqrt{\theta}}{\xi_1\xi_2} - \sqrt{-1}\,\xi_1\xi_2
   \biggr)L(1,\chi^6) \\
   = \log_\rho \bigl( \xi_1^2 - (\eta^4 + \eta^2 - \eta)\xi_1^6 + \eta\xi_1^{12}
     - (\eta^9 - \eta^3 - \eta - 1) \xi_1^{18} + \xi_1^{36} + \xi_1^{54} \bigr)
\end{multline}
and
\begin{multline} \label{E:LsumS4}
 \biggl( \frac{\sqrt{\theta}}{\xi_1\xi_2} - \sqrt{-1}\,\xi_1\xi_2 \biggr)(\theta+1)\sqrt{\theta}L(1,\chi^2) + \eta(\theta-1)\sqrt{\theta}
   L(1,\chi^4)\\ {}+ \biggl( \frac{\sqrt{\theta}}{\xi_1\xi_2} + \sqrt{-1}\,\xi_1\xi_2
   \biggr) (\theta + 1)\sqrt{\theta} L(1,\chi^6) = -\Log_\rho(S_4^1(\xi_1,1)).
\end{multline}
Given that $|\xi_1| = 3^{-7/2}$, more than likely $S_4^1(\xi_1,1)$ is small enough so that the right-hand side of \eqref{E:LsumS4} is $-\log_\rho(S_4^1(\xi_1,1))$.
\end{example}

\end{document}